\newtheorem{thm}{Theorem}[]
\newtheorem{lem}{Lemma}
\newcommand{\half}{\mbox{$\frac{1}{2}$}}
\newcommand{\IR}{\text{${\mathbb{R}}$}}
\newcommand{\IZ}{\text{${\mathbb{Z}}$}}
\newcommand{\comment}[1]{}
\title{Decay Properties of Spatial Molecular Orbitals}
\author{R. A. Zalik}
\address{Department of Mathematics and Statistics, Auburn University,
Auburn, Al. 36849--5310}\email{zalikri@auburn.edu}
 \thanks{2020 Mathematics Subject Classification: 35Q40 35P05 35J10}
 \thanks{Keywords: Laplace operator, Fourier transform, convolution, orbital, divided difference}
 \thanks{arXiv 2404.14420}
\date{}
\begin{document}
%\doublespacing

\begin{abstract} 
{Using properties of the Fourier transform we prove that if a Hartree-Fock molecular spatial orbital is in $L_1(\IR^3)$, then it decays to zero as its argument diverges to infinity. The proof is rigorous, elementary and short. Our result implies that occupied orbitals with positive eigenvalues will decay to zero provided they are in $L_1$. 
}
\end{abstract}
\maketitle

\pagestyle{myheadings}
\markboth{Orbitals}{ZALIK}

\section{Introduction} \label{section1}
We begin with a statement of the Hartree--Fock (or HF) equations, their importance, and the relevance of our result in Quantum Chemistry. These topics are studied in many introductory texts, such as \cite{IL,SO}.

The HF equations are nonlinear eigenvalue equations of the form
\begin{multline} \label{eq1}
\left [-\half\nabla^2 - \sum_{j=1}^n \frac{Z_j}{|r-\xi_j|} \right] \psi_a(r)+2\sum_{c=1}^n \int |\psi_c(s)|^2 \frac{1}{|r-s|} \,ds\,\psi_a(r)\\
-\sum_{c=1}^n \int \psi_c^{\ast}(s)\psi_a(s) \frac{1}{|r-s|}\, ds\, \psi_c(r)=\varepsilon_a \psi_a(r),  \qquad a=1, \dots, n,
\end{multline}
where $Z_j$ is the atomic number of nucleus $N_j$, which is located at point $\xi_j \in \IR^3$ and $\nabla^2$  is the Laplace operator.
These equations apply only to systems with an even number $N$ of electrons, and $n=N/2$.
Their solutions are called HF approximations; they are approximate solutions of the electronic Schr\"odinger equation and provide a gateway to more accurate approximations. The solution of  \eqref{eq1} yields a sequence of  functions $\psi_a$ --called spin 
orbitals -- with orbital energies $\varepsilon_a$;  they are continuous on $\IR^3$ and $||\psi_a||_{\infty} \le 1$, and are usually assumed to form an orthonormal set in $L_2$. The $N$ spin orbitals with lowest energy are called \emph{occupied} spin orbitals

The HF equations may also be used on ions. If an  HF calculation is performed on a multiply charged anion, even some of the occupied orbitals $\psi_a$ will have positive eigenvalues. This amounts to a prediction, using a theorem of Koopmans (1934), that the multiply charged anion will spontaneously lose electrons. Our result implies that the occupied orbitals with positive eigenvalues will still decay to zero provided they are in $L_1$. 

We will use standard mathematical notation. In particular, $\IZ$ will denote the set of integers and $\IR$ the set of real numbers;
if $r =(x_1,x_2,x_3)\in \IR^3$, then $|r|=(x_1^2+x_2^2+x_3^2)^{\half}$; if $z$ is a complex number, $z^{\ast}$ will denote its complex conjugate, and $\int f$ will stand for $\int_{\IR^3} f$. For $1 \le p <\infty$ we say that $f$ is in $L_p(\IR^3)$ (which we will abbreviate as $L_p$), if
\[
\int_{\IR^3} |f(r)|^p\,dr < \infty,
\]
and the $L_p(\IR^3)$ norm of $f$ is defined by
\[
||f\||_p =\left (\int_{\IR^3} |f(r)|^p \, dr \right )^{1/p}.
\]
We say that $f$ is in $L_{\infty}(\IR^3)$ (which we will abbreviate as $L_{\infty}$) if there is a constant $C$ such that
$|f(r)| \le C$ for all $r \in \IR^3$. If $f$ is in $L_{\infty}$, the $L_{\infty}$ norm is defined as
\[
||f||_{\infty} = \sup|f(r)|, \qquad r \in \IR^3,
\]
(where``$\sup$"is the least upper bound). We also define $h(r)=|r|^{-1}$; $\mathfrak{F}[f]$ or $\widehat{f}$ will stand for the Fourier transform of $f$; thus, if for instance $f \in L_1$, then
\[
\widehat{f}(r) = \int_{\IR^3} f(t) e^{-2\pi i r \cdot  t}\,dt.
\]
The convolution $f\ast g$ of $f$ and $g$ is defined by
\[
(f\ast g)(r)=\int f(s) g(r-s)\, ds =\int f(r-s) g(s)\, ds.
\]

The following auxiliary proposition, of some independent interest, shows that \eqref{eq1} is well posed:
\begin{lem} \label{lem1}
If $f \in L_1 \cap L_{\infty}$, then
\[
||f\ast h||_{\infty} \le (4/\sqrt{3}) \pi ||f||_{\infty}+||f||_1;
\]
therefore, if $f,g \in L_2 \cap L_{\infty}$
\[
||(fg)\ast h||_{\infty} \le
(4/\sqrt{3}) \pi ||f||_{\infty}  ||g||_{\infty} +||f||_2||\,||g||_2.
\]
In particular, 
\[
\left |\int \psi_a^{\ast}(s)\psi_b(s) \frac{1}{|s-r|}\, ds \right |  \le (4/\sqrt{3}) \pi +1, \qquad a, b=1, \dots, n.
\]
\end{lem}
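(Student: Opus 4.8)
The plan is to exploit the fact that the only genuine difficulty here is that the kernel $h(r)=|r|^{-1}$ belongs to no single space $L_p(\IR^3)$: it is locally integrable at the origin (the exponent $1$ is smaller than the dimension $3$), but decays too slowly to lie in $L_1$, and is unbounded at the origin so it is not in $L_\infty$. Consequently neither Young's inequality nor a single application of H\"older's inequality can control $f\ast h$. The remedy, which is really the one idea in the proof, is to split the convolution into a near-field piece, where the singularity of $h$ is absorbed by the boundedness of $f$, and a far-field piece, where the slow decay of $h$ is absorbed by the integrability of $f$. Although the paper's main results are phrased through the Fourier transform, for this particular pairing of norms the physical-space splitting is the most direct route, since a frequency-side estimate would naturally produce $\|f\|_2$ rather than $\|f\|_\infty$.

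Concretely, I would write $(f\ast h)(r)=\int f(r-u)\,|u|^{-1}\,du$, fix a radius $R>0$, and decompose the domain into $\{|u|\le R\}$ and $\{|u|>R\}$. On the ball I bound $|f(r-u)|\le\|f\|_\infty$ and use the radial computation
\[
\int_{|u|\le R}\frac{1}{|u|}\,du=\int_0^R\frac{1}{\rho}\,4\pi\rho^2\,d\rho=2\pi R^2,
\]
while on the complement I bound $|u|^{-1}<R^{-1}$, leaving $R^{-1}\int|f(r-u)|\,du=R^{-1}\|f\|_1$. Both estimates are uniform in $r$, so
\[
\|f\ast h\|_\infty\le 2\pi R^2\,\|f\|_\infty+R^{-1}\|f\|_1 .
\]
Taking $R=1$ gives $\|f\ast h\|_\infty\le 2\pi\|f\|_\infty+\|f\|_1$, and since $2\pi\le(4/\sqrt{3})\pi$ this already yields the first assertion; the hypothesis $f\in L_1\cap L_\infty$ guarantees both terms are finite. (In fact $2\pi<(4/\sqrt{3})\pi$, so this choice of $R$ proves a marginally stronger inequality than the one stated, which presumably arises from a slightly less economical split.)

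The remaining two inequalities follow as corollaries. If $f,g\in L_2\cap L_\infty$ then $fg\in L_1\cap L_\infty$, with $\|fg\|_\infty\le\|f\|_\infty\|g\|_\infty$ and, by the Cauchy--Schwarz inequality, $\|fg\|_1\le\|f\|_2\|g\|_2$; applying the first part to the product $fg$ then gives the second inequality. Finally, since $h$ is even, $\int\psi_a^{\ast}(s)\psi_b(s)\,|s-r|^{-1}\,ds=\big((\psi_a^{\ast}\psi_b)\ast h\big)(r)$, so the last bound is the second inequality applied to $f=\psi_a^{\ast}$ and $g=\psi_b$, together with the given normalizations $\|\psi_a\|_\infty\le1$ and $\|\psi_a\|_2=1$ (orthonormality), which turn the right-hand side into $(4/\sqrt{3})\pi+1$. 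I expect no serious obstacle: the near/far decomposition is essentially forced, and the only point that repays a little care is the choice of cutoff radius needed to obtain a clean constant.
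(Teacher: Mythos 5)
Your proof is correct and uses the same near/far decomposition at radius $1$ that the paper uses: the far-field piece is handled identically (bounding $|s|^{-1}\le 1$ and invoking $\|f\|_1$). The only difference is in the near-field term, where the paper applies H\"older's inequality to the pair $|f(r-s)|^2$, $|s|^{-2}$ and obtains the constant $(4/\sqrt{3})\pi$, whereas you bound $|f|$ by its sup directly and compute $\int_{|u|\le 1}|u|^{-1}\,du=2\pi$, which gives the sharper constant $2\pi<(4/\sqrt{3})\pi$ and hence still implies the stated inequality; your derivation of the two corollaries (Cauchy--Schwarz for $\|fg\|_1$, then the orthonormality and $\|\psi_a\|_\infty\le 1$) matches what the paper leaves implicit.
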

\begin{proof}
We have
\[
|(f \ast h)(r)| \le \int_{|s|\le1} |f(r-s)|\, |s|^{-1}\,ds+ \int_{|s| >1} |f(r-s)|\, |s|^{-1}\,ds  = I_1+I_2.
\] 
Applying H\"older's inequality and switching to spherical coordinates we get
\[
I_1 \le \left ( \int_{|s| \le1} |f(r-s)|^2\, ds \int_{|s|\le1} |s|^{-2}\,ds\right )^{\half} \le (4/\sqrt{3})  \pi ||f||_{\infty}. 
\]
On the other hand,
\[
I_2 \le ||f||_1.
\]
\end{proof}
The proof of the following theorem relies on basic properties of the Fourier transform (\cite{Fo, SS}).
\begin{thm}\label{thm1}
Let $\psi_a \in L_1$; then $\lim_{r \rightarrow \infty}\psi_a(r) =0$.
\end{thm}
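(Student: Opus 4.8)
The plan is to pass to the Fourier side and show that $\widehat{\psi_a}\in L_1$; once this is known, Fourier inversion writes the continuous function $\psi_a$ as the inverse transform of an $L_1$ function, and the Riemann--Lebesgue lemma forces $\psi_a(r)\to 0$ as $|r|\to\infty$. To control $\widehat{\psi_a}$ I would exploit equation \eqref{eq1}, rearranged so that the Laplacian is isolated:
\[
-\nabla^2\psi_a=\varepsilon_a\psi_a+\Big(\sum_{j}\frac{Z_j}{|r-\xi_j|}\Big)\psi_a-2\sum_{c}(|\psi_c|^2\ast h)\,\psi_a+\sum_{c}(\psi_c^{\ast}\psi_a\ast h)\,\psi_c=:\Phi_a .
\]
The function $\Phi_a$ involves no derivatives, so if I can show $\Phi_a\in L_2$ then $-\nabla^2\psi_a\in L_2$, and applying the Fourier transform gives $4\pi^2|\xi|^2\widehat{\psi_a}(\xi)=\widehat{\Phi_a}(\xi)$; by Plancherel this yields $\||\xi|^2\widehat{\psi_a}\|_2<\infty$, i.e. quantitative decay of $\widehat{\psi_a}$ at infinity.

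The core estimate is therefore $\Phi_a\in L_2$, which I would verify term by term. The term $\varepsilon_a\psi_a$ is in $L_2$ because $\psi_a$ is. For the nuclear term, near each $\xi_j$ the factor $|r-\xi_j|^{-1}$ is square-integrable on a bounded neighbourhood (since $\int_{|x|\le1}|x|^{-2}\,dx<\infty$ in $\IR^3$) while $\psi_a$ is bounded, and away from the finitely many nuclei the potential is bounded and $\psi_a\in L_2$; hence $\big(\sum_j Z_j|r-\xi_j|^{-1}\big)\psi_a\in L_2$. For the Coulomb and exchange terms Lemma \ref{lem1} is exactly what is needed: it shows $|\psi_c|^2\ast h$ and $\psi_c^{\ast}\psi_a\ast h$ are bounded, so these terms are bounded multiples of $\psi_a$, resp. $\psi_c$, and thus lie in $L_2$. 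Summing the finitely many contributions gives $\Phi_a\in L_2$.

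It remains to convert the two pieces of information into $\widehat{\psi_a}\in L_1$ and to conclude. Here is where the hypothesis $\psi_a\in L_1$ enters: it makes $\widehat{\psi_a}$ continuous and bounded with $\|\widehat{\psi_a}\|_{\infty}\le\|\psi_a\|_1$, which controls $\widehat{\psi_a}$ near the origin, while $\||\xi|^2\widehat{\psi_a}\|_2<\infty$ controls it at infinity. Splitting $\int|\widehat{\psi_a}|$ over $|\xi|\le1$ and $|\xi|>1$, the first integral is at most $\|\psi_a\|_1$ times the volume of the unit ball, and on the second I would write $|\widehat{\psi_a}|=|\xi|^{-2}\,\big(|\xi|^2|\widehat{\psi_a}|\big)$ and apply the Cauchy--Schwarz inequality, using $\int_{|\xi|>1}|\xi|^{-4}\,d\xi<\infty$ together with $|\xi|^2\widehat{\psi_a}\in L_2$. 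Both pieces being finite gives $\widehat{\psi_a}\in L_1$, and the conclusion follows by Fourier inversion and Riemann--Lebesgue. I expect the main obstacle to be the rigorous justification of $\Phi_a\in L_2$ --- in particular treating the nuclear singularities correctly and legitimising the identity $4\pi^2|\xi|^2\widehat{\psi_a}=\widehat{\Phi_a}$ for a function $\psi_a$ that is only assumed continuous, which is cleanest to read as an identity of tempered distributions once $\Phi_a\in L_2$ is in hand.
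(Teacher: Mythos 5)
Your proposal is correct and follows essentially the same route as the paper: isolate the Laplacian in \eqref{eq1}, show the right-hand side is in $L_2$ (splitting near and far from the nuclear singularities, and invoking Lemma \ref{lem1} for the Coulomb and exchange terms), deduce $|\xi|^2\widehat{\psi}_a\in L_2$, conclude $\widehat{\psi}_a\in L_1$ by Cauchy--Schwarz, and finish with Fourier inversion and Riemann--Lebesgue. The only cosmetic difference is at the last Cauchy--Schwarz step, where the paper bounds $\widehat{\psi}_a$ near the origin via $\widehat{\psi}_a\in L_2$ and the weight $(1+|\xi|^2)^{-1}\in L_2(\IR^3)$, while you use $\|\widehat{\psi}_a\|_\infty\le\|\psi_a\|_1$ on the unit ball; both work equally well.
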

\begin{proof}
Equation \eqref{eq1} may be written in the equivalent form
\begin{equation} \label{eq2}
\mathfrak[\nabla^2\psi_a](r)=g_a(r),
\end{equation}
where 
\[
g_a(r)=  - 2\sum_{j=1}^n \frac{Z_j}{|r-\xi_j|}\psi_a(r) +4 A \psi_a(r) - 2\sum_{c=1}^n B_{a,c} \psi_c(r) -2\varepsilon_a \psi_a(r)
\]
with
\[
A= \sum_{c=1}^n \int |\psi_c(s)|^2 \frac{1}{|r-s|} \,ds
\]
and
\[
B_{a,c}= \int \psi_c^{\ast}(s)\psi_a(s) \frac{1}{|r-s|}\, ds.
\]

On the other hand, let $K$ be arbitrary but fixed and such that $|\xi_j| \le K, j=1, \dots, n$; then
\begin{multline*}
\int |r-\xi_j|^{-2} |\psi_a(r)|^2 \,dr = \\
\int_{|r| \le K+1} |r-\xi_j|^{-2} |\psi_a(r)|^2 \,dr + \int _{|r| > K+1} |r-\xi_j|^{-2} |\psi_a(r)|^2 \,dr = J_1 + J_2.
\end{multline*}
We have:
\[
J_1 \le \int_{|r| \le K+1} |r-\xi_j|^{-2}\,dr \le \int_{|r| \le K+1 + |\xi_j|} |r|^{-2}\,dr = 4 \pi (K+1 + |\xi_j| )
\]
and, since $|r|>K+1$ implies that $ |r-\xi_j| \ge 1$,
\[
J_2  \le \int _{|r| > K+1} |\psi_a(r)|^2 \,dr \le ||\psi_a||^2_2,
\]
which implies that $|r-\xi_j|^{-1}\psi_a(r)$ is in $L_2$ for all $1 \le j \le n$, which in turn implies that $g(r) \in L_2$.
But  $\psi_a \in L_1$ by hypothesis; thus
\[
\mathfrak[\nabla^2\psi_a](r)=-4\pi^2|r|^2 \widehat{\psi}_a(r).
\]
Since  $\psi_a$ is bounded, it is in $L_2$ as well (and therefore so is $\widehat{\psi}_a$); thus \eqref{eq2}  implies that $(1+|r|^2)\widehat{\psi}_a \in L_2$. Since
\[
\widehat{\psi}_a(r)=\frac{1}{1+|r|^2}(1+|r|^2)\widehat{\psi}_a(r)
\]
and $\frac{1}{1+|r|^2} \in L_2$, we deduce that $\widehat{\psi}_a \in L_1$, whence 
\[
\lim_{r\rightarrow \infty} \mathfrak{F}[\widehat{\psi}_a](r)=0. 
\]
Since $\psi_a$ is continuous on $\IR^3$ we know that 
$\psi_a(r)=\mathfrak{F}[\widehat{\psi}_a](-r)$, and the assertion follows.
\end{proof}
\section{Acknowledgement}
\thanks{The author is grateful to Professor Vincent Ortiz, Department of Chemistry and Biochemistry, Auburn University. The explanation of the relevance of Theorem 1 in Quantum Chemistry is based on his helpful comments.}

\end{document}